\documentclass[12pt]{amsart}

\usepackage{amsfonts,amsmath,amssymb,amsthm,graphicx,enumerate, mathabx}
\usepackage[english]{babel}
\usepackage[utf8]{inputenc}
\usepackage[T1]{fontenc}

\usepackage{framed}

\usepackage{marginnote,color}

\usepackage{array}
\usepackage{hyperref}

\usepackage[margin=2.8cm]{geometry}

\newcommand{\defeq}{\mathrel{\mathop:}=}

\newcommand{\R}{\mathbb{R}}

\newcommand{\E}{\mathbb{E}}

\renewcommand{\P}{\mathbb{P}}

\newcommand{\N}{\mathbb{N}}
\newcommand{\Z}{\mathbb{Z}}
\newcommand{\indic}{{\bf 1}}

 % de Leo

%%%%%%%%%%%%%%%%%%%%%

% bar

\newcommand{\pb}{\overline p}

% hat : chapeau

% tilde

\newcommand{\Nt}{\widetilde N}

% check

% ronde

%%%%%%%%%%%%%%%%%%%%%

%\newcommand{\indicator}{{\mathbb 1}}
%\newcommand{\indi}[1]{{\indicator_{\{#1\}}}}

\DeclareMathOperator*{\limup}{\lim\!\!\uparrow}

% Rappel : \biguplus et \uplus donnent le symbole "union" avec un + à l'intérieur

\newcommand{\dm}{\begin{pmatrix}} % Matrice
\newcommand{\fm}{\end{pmatrix}}
\newcommand{\ddm}{\begin{vmatrix}} % Déterminant
\newcommand{\fdm}{\end{vmatrix}}

% pour pouvoir taper le symbole €
\usepackage{textcomp,eurosym}

% pour les éléments intégraux

% pour les ensembles ou les probas conditionnelles
 % "sachant"

\newcommand{\st}{\,:\,} % "such that""

\newtheorem{theorem}{Theorem}
\newtheorem{lemma}{Lemma}
\newtheorem{proposition}{Proposition}

\newtheorem*{remark*}{Remark}

%%%%%%%%%%%%%%%%%%%%%%%%%%%%%%%%%%%%%%%%%%%%
%%%%%%%%%%%%%%%%%%%%%%%%%%%%%%%%%%%%%%%%%%%%

\author[V. Sidoravicius]{Vladas Sidoravicius}
\address{Courant Institute of Mathematical Sciences, New York\\
NYU-ECNU Institute of Mathematical Sciences at NYU Shanghai}
\email{vs1138@nyu.edu}

\author[L. Tournier]{Laurent Tournier}
\address{LAGA, Universit\'e Paris 13, Sorbonne Paris Cit\'e, CNRS, UMR 7539, 93430 Villetaneuse, France. 
 }
\email{tournier@math.univ-paris13.fr}

\title[]{The ballistic annihilation threshold is $1/4$}

\begin{document}

\ 
\vspace{-1cm}
\maketitle

{\footnotesize \noindent{\slshape\bfseries Abstract.} 
We consider a system of annihilating particles where particles start from the points of a Poisson process on the line, move at constant i.i.d.\ speeds symmetrically distributed in $\{-1,0,+1\}$ and annihilate upon collision. We prove that particles with speed 0 vanish almost surely if and only if their initial density is smaller than or equal to $1/4$, and give an explicit formula for the probability of survival of a stationary particle in the supercritical case, which is in accordance with the predictions of~\cite{droz1995ballistic}. The present proof relies essentially on an identity proved in J.\,Haslegrave's recent paper \cite{haslegrave}. }

\

\textit{Since a key ingredient in our proof comes from the very recent paper~\cite{haslegrave}, we borrow all notations from that paper in order to ease a joint reading, and refer to it for definitions. 
Let us also only give a brief description, and refer to either~\cite{haslegrave} or~\cite{sidoravicius-tournier} for a more elaborate introduction. }

We consider a system of particles, known as a particular case of \emph{ballistic annihilation}, whose initial condition is random but whose subsequent time evolution follows deterministically. Let $p\in[0,1]$. At time 0, particles (or ``bullets'') are located at the points of a Poisson point process of intensity 1 on the real line, and endowed with i.i.d.\ speeds distributed according to the symmetric law $\frac{1-p}2\delta_{-1}+p\delta_0+\frac{1-p}2\delta_1$. Then, all particles move along the line at their given constant speed, and pairs of particles annihilate as soon as they collide. We are interested in the eventual survival of particles with speed 0 (or ``stationary'' particles). Due to symmetries, it is in fact often more convenient to only consider particles on the real half-line $(0,\infty)$ and wonder whether 0 is eventually reached by a particle or not. For that reason, unless specified otherwise by a subscript $\R$, we henceforth consider that the model is defined on $(0,\infty)$. 

Our main result is an explicit computation of the probability that a stationary particle survives, which confirms predictions from the physics literature~\cite{droz1995ballistic}. A rigorous proof of the fact that stationary particles vanish for small $p$ had been a surprisingly difficult open problem until Haslegrave's recent breakthrough~\cite{haslegrave}. Although the algebraic nature of the proof in~\cite{haslegrave} still didn't provide much probabilistic insight, it opened the way to explicit computations and thus to a possible confirmation of some facts from~\cite{droz1995ballistic}. This program is achieved in the present paper by providing an algebraic identity that combines with the ones in~\cite{haslegrave} into a solvable equation, and by discussing the topological arguments required to identify the relevant solution of that equation and get the following full conclusion. 

\begin{theorem}\label{thm:main}
This model undergoes a phase transition at $p_c=\frac14$. More precisely, the probability that $0$ is reached by a particle on $(0,\infty)$ is, for all $p\in(0,1]$, 
\[q\defeq\P(0\leftarrow\bullet)=\begin{cases} 1 & \text{if $p\le1/4$}\\\frac1{\sqrt p}-1&\text{if $p>1/4$.}\end{cases}\]
Equivalently, the probability that a given stationary particle survives in the full line process is
\[\theta(p)\defeq\P((\bullet\not\rightarrow0)_\R\wedge(0\not\leftarrow\bullet)_\R)=\begin{cases} 0 & \text{if $p\le1/4$}\\ \Big(2-\frac1{\sqrt p}\Big)^2 & \text{if $p>1/4$.}\end{cases}\]
\end{theorem}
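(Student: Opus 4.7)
\medskip

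\noindent\textbf{Reduction from $\theta$ to $q$.} My first step is to establish the equivalence of the two displayed formulas. By the Slivnyak--Mecke formula for Poisson point processes, conditioning the full-line model on the presence of a stationary particle at the origin leaves the restrictions of the point process to $(-\infty,0)$ and $(0,+\infty)$ independent and distributed as in the original model. The planted stationary particle at $0$ survives iff it is not hit from either side; by symmetry $x\mapsto-x$ this probability equals $(1-q)^2$, and substituting $q=1/\sqrt p-1$ gives $\theta(p)=(2-1/\sqrt p)^2$ for $p>1/4$ (and $\theta=0$ for $p\le 1/4$). So it suffices to compute $q$.

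\medskip

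\noindent\textbf{A recursion for $q$.} To derive an equation for $q$, I condition on the type of the leftmost particle $P_1$ of the Poisson process on $(0,+\infty)$. If $T_1=L$, then $P_1$ reaches $0$ deterministically. If $T_1=S$, then $P_1$ can only be killed by a left-mover coming from $(x_1,\infty)$; by translation invariance this happens with probability $q$, and when it does the intermediate particles $P_2,\ldots,P_{j^*-1}$ must annihilate completely among themselves (as they do not interact with $P_1$ or $P_{j^*}$), leaving $\{P_{j^*+1},P_{j^*+2},\ldots\}$ as an independent copy of the original process in which the event $\{0\leftarrow\bullet\}$ has probability $q$ once more. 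Hence $q_S=q^2$. A parallel analysis for $T_1=R$ yields $q_R=\beta\, q$, where $\beta=\P(P_1\text{ is killed}\mid T_1=R)$ is a new auxiliary probability; combining the three cases gives
\[q=\tfrac{1-p}2+p\,q^2+\tfrac{1-p}2\,\beta\, q.\]
The identities from \cite{haslegrave} should provide analogous decompositions expressing $\beta$ and related decomposition probabilities in terms of $q$. The plan is then to prove the new algebraic identity announced in the introduction---obtained, I would guess, by double-counting a suitable probability (for instance the probability of some ``sealed annihilation pattern'') or by exploiting a symmetry of the model not used in \cite{haslegrave}---and to combine it with the Haslegrave identities to eliminate all auxiliaries, producing the single polynomial equation $p\,(1+q)^2=1$, whose shape is forced by the conjectured answer $q=1/\sqrt p-1$.

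\medskip

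\noindent\textbf{Root selection.} The equation $p(1+q)^2=1$ has two roots $-1\pm1/\sqrt p$, of which only $q_+=1/\sqrt p-1$ is a plausible probability. For $p\le 1/4$ one has $q_+\ge 1$, which is inadmissible, and Haslegrave's theorem gives $q=1$ in that regime. For $p>1/4$, I would show that $q=q_+$ (rather than any other admissible value) by combining a coupling-based monotonicity of $p\mapsto q(p)$, continuity in $p$ (via approximation by truncated processes on $(0,N)$), and the boundary values $q(1)=0$ and $q(1/4)=1$. Together these pin down the algebraic branch $q_+$ throughout $(1/4,1]$, and the critical value $p_c=1/4$ is identified as the unique $p$ at which the algebraic branch $q_+$ meets the subcritical plateau $q\equiv 1$.

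\medskip

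\noindent\textbf{Main obstacle.} I expect the hardest step to be precisely this root-selection argument, which the introduction already flags as a nontrivial ``topological'' input. Algebraic manipulations yield the polynomial equation only up to the choice of root; excluding the spurious branch, and---more subtly---ruling out the possibility that the algebraic identity delivers merely an inequality rather than an equation, requires soft, non-algebraic information about $q$ such as monotonicity and continuity in $p$. Establishing the new identity itself is likely the technical heart of the paper, but the qualitative tracking of $q(p)$ across the phase transition is what turns ``$q$ solves a polynomial equation'' into ``$q$ is this specific function of $p$.''
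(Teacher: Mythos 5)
Your overall architecture (an algebraic equation for $q$ plus a regularity argument to select the branch) matches the paper's, and the reduction $\theta=(1-q)^2$ is fine, but both of your main steps have genuine gaps. First, you never actually derive the polynomial equation: your recursion introduces an unresolved auxiliary $\beta$, and you then assert that the missing identity will ``produce'' $p(1+q)^2=1$ because its shape is forced by the conjectured answer --- that is circular. The paper's new input is the precise identity $r=\frac12 pq^2$ for $r=\P((\vec\bullet_1\rightarrow\dot\bullet)\wedge(0\leftarrow\bullet))$, proved by a measure-preserving reversal of the configuration on the interval between the first particle and the one it annihilates with (so your guess ``exploit a symmetry'' is the right instinct, but the actual bijection, and the factor $\frac12$ coming from two i.i.d.\ atomless distances, is the technical content). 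Plugging this into Haslegrave's identity gives $(1-q)\bigl(1-p(1+q)^2\bigr)=0$, i.e.\ a \emph{dichotomy} $q=1$ or $q=\frac1{\sqrt p}-1$, not a single quadratic; keeping the factor $1-q$ is essential for the second half.

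Second, your branch-selection step leans on a coupling-based monotonicity of $p\mapsto q(p)$ and on continuity of $q$ in $p$. Neither is available: the paper explicitly warns that the apparent lack of monotonicity is a major difficulty of this model, and approximation by truncated processes only exhibits $q$ as an increasing limit $q=\sup_k q_k$ of polynomials, which yields lower semicontinuity, not continuity. The paper instead runs a connectedness argument on $(\frac14,1)$: by the dichotomy, $\{\theta=0\}=\bigcup_k\{q_k>\frac1{\sqrt p}-1\}$ is open, and $\{\theta>0\}$ is shown open via a second finitary criterion $\exists k:\E[N_k]>0$ (where $N_k$ counts surviving stationary minus surviving left-going particles among the first $k$), whose equivalence with $\theta>0$ requires a nontrivial superadditivity and exploration argument. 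Both sets being open, and $\theta>0$ being already known for $p>\frac13$, the subcritical set in $(\frac14,1)$ is empty. Without a substitute for this two-sided openness --- or an actual proof of continuity or monotonicity, which you do not supply --- your argument does not close.
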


The proof decomposes into two parts. First, combining the key identity from~\cite{haslegrave} with a new identity, we derive algebraically that, for all $p$, the probability  $q$ is either equal to $1$ or to $\frac1{\sqrt p}-1$. This entails $q=1$ when $p\le1/4$, but doesn't prove the converse. For the latter, we need a priori regularity properties of $q$ (or $\theta$) as a function of $p$. Unfortunately, we cannot rely of monotonicity since the apparent lack thereof is precisely a major difficulty in this model. 

In a second part, we use finitary conditions characterizing the survival phase together with the previous dichotomy in order to show that $q<1$ on the whole interval $(\frac14,1)$, which completes the proof of our main theorem. 

A last section of the paper contains the definition and properties of one of the above mentioned finitary conditions. 

\section{Algebraic identities}

In this section, we prove
\begin{proposition}\label{prop:dichotomy}
For all $p\in(0,1)$, $q=1$ or $q=\frac1{\sqrt p}-1$. In particular, $q=1$ if $p\le\frac14$. 
\end{proposition}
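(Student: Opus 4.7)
The overall plan is to deduce that $q$ satisfies a polynomial equation whose only two roots are $1$ and $\tfrac{1}{\sqrt p}-1$, and then to use the probabilistic constraint $q\le1$ to pin down the value in the subcritical regime.

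I would first recall from~\cite{haslegrave} the auxiliary probabilities introduced there (events of the form ``the first moving particle to the right of $0$ is annihilated by a particle of some given type'', or the analogues for a tagged stationary particle), together with the key identity proved in~\cite{haslegrave}, which relates $p$, $q$ and one remaining auxiliary unknown, say $r$, by an equation $F(p,q,r)=0$. That identity alone does not determine $q$, so a second, algebraically independent relation is needed.

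Next I would produce such a relation $G(p,q,r)=0$ by a distinct decomposition of the half-line configuration near $0$. Natural candidates include conditioning on the position and type of the first particle that annihilates the leftmost stationary particle, or exploiting the left/right symmetry between the two moving types (both of intensity $(1-p)/2$) together with space reversal. Since the critical density is $1/4$, i.e.\ $\sqrt p=1/2$, one expects this new identity to involve a factor $\sqrt p$, reflecting the fact that a stationary particle needs \emph{two} moving particles to be annihilated. Eliminating $r$ between $F=0$ and $G=0$ then leaves a single polynomial equation in $q$, which should simplify to
\[
\sqrt p\,(q-1)(q+1)=q-1,
\]
that is, $(q-1)\bigl(\sqrt p\,(q+1)-1\bigr)=0$. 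Hence either $q=1$ or $q=\tfrac{1}{\sqrt p}-1$.

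For the ``in particular'' statement, the key observation is that $\tfrac{1}{\sqrt p}-1\ge 1$ if and only if $p\le 1/4$, with strict inequality when $p<1/4$. Since $q$ is a probability, the second root is inadmissible when $p<1/4$, which forces $q=1$; at $p=1/4$ the two roots coincide and both equal $1$, so again $q=1$. The main obstacle is clearly the new identity: one has to \emph{find} and \emph{prove} a probabilistic identity that is algebraically independent of Haslegrave's, and yet conspires with it so that the auxiliary unknown $r$ drops out and the factor $\sqrt p$ (responsible for the threshold $1/4$) appears at the correct place. Once the identity is identified, the remaining algebraic elimination and the use of $q\le 1$ are routine.
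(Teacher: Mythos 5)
Your overall strategy is exactly the paper's: keep Haslegrave's identity $q=\frac{1-p}2(1+q)+r(1-q)+pq^3$ with $r=\P((\vec\bullet_1\rightarrow\dot\bullet)\wedge(0\leftarrow\bullet))$, supply one further relation, eliminate $r$, factor, and invoke $0\le q\le 1$. The closing algebra and the handling of $p\le\frac14$ are correct. But there is a genuine gap precisely where the paper's new mathematical content sits: you never state, let alone prove, the second identity, and you explicitly defer it as ``the main obstacle''. The paper's Lemma~\ref{lemma2} asserts $r=\frac12pq^2$, and its proof is not routine. One considers the position $y_1$ of the particle that annihilates $\bullet_1$ and the position $y_0$ of the first particle to reach $0$, and reverses the configuration on the interval $[x_1,y_1]$ (reflecting positions and flipping speeds). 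This measure-preserving map sends $\{0\leftarrow\bullet\}\cap\{\vec\bullet_1\rightarrow\dot\bullet\}$ bijectively onto $\{0\leftarrow\bullet\}\cap\{\dot\bullet_1\leftarrow\bullet\}\cap\{y_1-x_1<y_0-y_1\}$ --- the extra geometric constraint $y_1-x_1<y_0-y_1$ is what makes the map onto, and spotting it is the crux. One then notes that, conditionally on $\{\dot\bullet_1\leftarrow\bullet\}\cap\{y_1\leftarrow\bullet\}_{(y_1,\infty)}$, the two distances are i.i.d.\ with atomless law, so the constraint contributes a factor $\frac12$, and the unconstrained event has probability $pq^2$. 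Your mention of ``left/right symmetry together with space reversal'' gestures in the right direction, but without identifying the event, the interval to reverse, and the ordering constraint, there is no proof.

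A smaller point: your heuristic that the new identity should ``involve a factor $\sqrt p$'' is misleading. The identity $r=\frac12pq^2$ is polynomial in $p$ and $q$, and the elimination yields the polynomial equation $0=(1-q)\bigl(1-p(1+q)^2\bigr)$; the $\sqrt p$ appears only when the quadratic factor is solved for $q\ge0$ (equivalently, after dividing your equation by the strictly positive factor $1+\sqrt p\,(1+q)$). So while your displayed equation is equivalent to the paper's, expecting the square root to show up inside the probabilistic identity itself would send you looking for the wrong kind of relation.
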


Let us first recall a key identity from \cite{haslegrave}, relating $p$, $q$ and
\[r\defeq\P((\vec\bullet_1\rightarrow\dot\bullet)\wedge(0\leftarrow\bullet)).\]

\begin{lemma}[{\cite[Lemma 2]{haslegrave}}]\label{lemma1}
$q=\frac{1-p}2(1+q)+r(1-q)+pq^3$.
\end{lemma}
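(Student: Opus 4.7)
The plan is to condition on the speed $v_1\in\{-1,0,+1\}$ of the leftmost particle $x_1$ of the process on $(0,\infty)$, with respective probabilities $\tfrac{1-p}{2},p,\tfrac{1-p}{2}$, and to show that the three conditional contributions to $q$ combine into the three summands on the right-hand side of the identity. The core tool throughout is the shift-invariance of the Poisson point process: after translation, the configuration on $(x_1,\infty)$ is distributed as a fresh copy of the original half-line process.

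The cases $v_1=\pm 1$ jointly yield $\tfrac{1-p}{2}(1+q)+r(1-q)$. If $v_1=-1$, the leftmost particle is unimpeded and reaches $0$ almost surely, contributing $\tfrac{1-p}{2}$. If $v_1=+1$, then $x_1$ moves right and $\{0\leftarrow\bullet\}$ occurs iff $x_1$ is first annihilated (otherwise any leftward particle farther right would be absorbed before reaching $0$) and a second leftward particle then traverses the residual configuration down to $0$. I would decompose this sub-event further according to whether the collision partner of $x_1$ is stationary --- the event $\vec\bullet_1\rightarrow\dot\bullet$, whose intersection with $\{0\leftarrow\bullet\}$ has probability exactly $r$ by definition --- or leftward, the latter being analysed by shifting the origin to the collision point and invoking shift-invariance to couple the post-collision process with an independent copy of the full system. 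Careful bookkeeping of the contributions should produce $\tfrac{1-p}{2}q+r(1-q)$.

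The case $v_1=0$ yields the remaining term $pq^3$ and is where the main difficulty lies. Here $x_1$ acts as a stationary barrier, so $\{0\leftarrow\bullet\}$ occurs iff a first leftward particle from $(x_1,\infty)$ reaches $x_1$ and annihilates it, and a second leftward particle then reaches $x_1$ (and thence $0$, since $(0,x_1)$ is empty). Writing $N$ for the number of leftward ``escapes'' in the half-line process on $(x_1,\infty)$, shift-invariance gives $\P(N\geq 1)=q$, and the conditional probability we seek is $\P(N\geq 2)$. The task thus reduces to the identity $\P(N\geq 2)=q^3$. A single restart argument conditioning on the first escape would only give $q^2$, so the heart of the proof --- and the main obstacle --- is to exhibit an additional factor of $q$ via a three-fold independence in the annihilation dynamics, obtained from a stopping-time decomposition of the configuration produced by the first escape into three independent sectors of the underlying Poisson process, each contributing one factor of $q$.
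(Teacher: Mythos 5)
The paper itself does not prove this identity --- it is imported verbatim from \cite{haslegrave} --- so your attempt has to be judged on its own terms. Your overall plan (condition on the speed of $\bullet_1$ and use translation invariance of the Poisson process beyond $x_1$) is sensible, but the way you apportion the right-hand side among the three cases is incorrect, and the step you explicitly single out as ``the heart of the proof'' asks you to establish a false statement. In the case $v_1=0$ you correctly reduce the contribution to $p\,\P(N\ge 2)$, where $N$ is the number of left-going particles escaping from the half-line process to the right of $x_1$, and you correctly note that the restart argument at the first escape gives $\P(N\ge 2)=q^2$. That computation is already the end of the story: $N$ is geometric of parameter $1-q$ (the configuration to the right of the first escaping left-goer is a fresh copy of the half-line process, exactly as this paper observes in the proof of Proposition~\ref{prop:characterization}), so the $v_1=0$ case contributes $pq^2$, not $pq^3$. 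There is no ``additional factor of $q$'' to be extracted from a three-fold decomposition; the identity $\P(N\ge 2)=q^3$ is simply false whenever $0<q<1$. The same conclusion is forced by the paper's own proof of Lemma~\ref{lemma2}, which computes $\P\big((0\leftarrow\bullet)\wedge(\dot\bullet_1\leftarrow\bullet)\big)=pq^2$, and that event is precisely $\{v_1=0\}\cap\{0\leftarrow\bullet\}$.

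Consequently the two moving cases must together contribute $\frac{1-p}2(1+q)+r(1-q)+pq^3-pq^2$ rather than your $\frac{1-p}2(1+q)+r(1-q)$: the correction $-pq^2(1-q)$ has to come out of the case $v_1=+1$, more precisely out of the sub-event where $\bullet_1$ is right-going and is annihilated by a \emph{left-going} particle. That is exactly the sub-case you defer to ``careful bookkeeping'', and it is where the real content of the lemma lies: it needs its own symmetry/reversal analysis (in the spirit of the proof of Lemma~\ref{lemma2}) rather than a bare restart, because the timing constraint between the death of $\bullet_1$ and the arrival of the next escaping left-goer does not decouple. A numerical check makes the misallocation concrete: at $p=4/9$ one has $q=1/2$ and $r=\frac12pq^2=\frac1{18}$, so the three contributions must be $\frac{10}{36}$, $\frac{4}{36}$ and $\frac{4}{36}$, whereas your formulas give $\frac{10}{36}$, $\frac{6}{36}$ and $\frac{2}{36}$ --- the two errors cancel in the sum, which is why matching the target identity masked them. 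As it stands the proposal does not prove the lemma.
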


The conclusion will follow from the next lemma:

\begin{lemma}\label{lemma2}
$r=\frac12pq^2$.
\end{lemma}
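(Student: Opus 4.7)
The plan is to establish $r = \tfrac12 pq^2$ by decomposing the event $(\vec\bullet_1\to\dot\bullet)\cap(0\leftarrow\bullet)$ according to the positions of the first particle and its stationary partner. I would condition on $X_1$, the position of the (right-moving) first particle, and $Y>X_1$, the position of its stationary partner; setting $L:=Y-X_1$, the three portions of the initial configuration in $(0,X_1)$ (empty), $(X_1,Y)$, and $(Y,\infty)$ are independent Poisson configurations.

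The central structural claim is that on the event $\vec\bullet_1\to\dot\bullet$, every particle initially in $(X_1,Y)$ must be matched with another particle in the same interval. Indeed, a right-mover in $(X_1,Y)$ would reach $Y$ strictly before the first particle does (both have speed $+1$ but it starts to the right of $X_1$), destroying $Y$ prematurely; a surviving left-mover in $(X_1,Y)$ would intercept the first particle's trajectory before $Y$ does; a stationary particle in $(X_1,Y)$ would simply block the first particle; and the non-crossing structure of the annihilation arcs forbids arcs crossing the $X_1$--$Y$ arc. Consequently, $(0,Y)$ is empty of live particles from time $L$ onwards, and left-movers entering from $(Y,\infty)$ thereafter pass unimpeded to $0$. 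Translating the sub-process on $(Y,\infty)$ to $(0,\infty)$ and treating $Y$ as a stationary boundary particle, the event $\{0\leftarrow\bullet\}$ under $\vec\bullet_1\to\dot\bullet$ becomes equivalent to the annihilation time $T_Y$ of this boundary particle in the sub-process satisfying $L\le T_Y<\infty$ (the lower bound being forced by $\vec\bullet_1\to\dot\bullet$ itself). By translation invariance, $\P(T_Y<\infty)=q$.

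Combining these ingredients and integrating over $X_1$ (whose density is $e^{-X_1}$) and $Y$ (or rather $L = Y - X_1$) yields
\[
r \;=\; \frac{1-p}{2}\cdot p \cdot \int_0^\infty f(L)\,\P(L\le T_Y<\infty)\,dL,
\]
where $f(L)$ is the probability that a fresh Poisson configuration on an interval of length $L$ annihilates fully and internally. The main obstacle is then to identify this integral as $q^2/(1-p)$, after which the constants collapse to $\tfrac12 pq^2$. I anticipate this last step to be the technical heart of the lemma: it likely rests on a clever rewriting, perhaps a regenerative decomposition at successive arrivals at $0$, or a coupling with an augmented process in which an extra particle is inserted so as to produce two independent ``reach the origin'' events contributing $q^2$. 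The factor $\tfrac12$ should then emerge naturally from the $\pm 1$ symmetry of the moving speeds exploited in such a coupling—presumably in a manner analogous to the algebraic identity proved in~\cite{haslegrave}.
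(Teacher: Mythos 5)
Your setup --- conditioning on the first (right-moving) particle, on its stationary partner at $Y$, and using the independence of the Poisson configurations on $(X_1,Y)$ and $(Y,\infty)$ --- is a sensible start, but the proof has a genuine gap: the step that actually produces the value $\frac12pq^2$ is not carried out. You reduce the lemma to an integral of the form $\frac{1-p}2\,p\int_0^\infty f(L)\,\P(L\le T_Y<\infty)\,\d L$ and then state that identifying it with $q^2/(1-p)$ is ``the technical heart'' and ``likely rests on a clever rewriting'', with the factor $\frac12$ expected to ``emerge naturally'' from a symmetry. No such rewriting is given, and nothing in what precedes produces either the $q^2$ or the $\frac12$: the law of $T_Y$ is not explicit, and the event behind $f(L)$ is coupled to $L$ in a complicated way. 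A secondary problem is that $f(L)$, defined as the probability that the configuration on $(X_1,Y)$ ``annihilates fully and internally'', is only a \emph{necessary} condition for the partner of $\vec\bullet_1$ to sit at $Y$: internal annihilation in isolation does not guarantee that the first bullet passes through. For instance, a stationary particle at $z_1$ and a left-mover at $z_2$ with $X_1<z_1<z_2<Y$ annihilate each other in isolation, but if $z_1-X_1<z_2-z_1$ the first bullet reaches $z_1$ first, annihilates there, and the left-mover escapes; the correct event therefore carries extra timing constraints tying the interior configuration to $X_1$.

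The paper's proof avoids any such computation by exhibiting a measure-preserving involution: reversing the configuration on $[x_1,y_1]$ (reflecting positions about the midpoint and flipping speeds) maps $\{0\leftarrow\bullet\}\cap\{\vec\bullet_1\rightarrow\dot\bullet\}$ bijectively onto $\{0\leftarrow\bullet\}\cap\{\dot\bullet_1\leftarrow\bullet\}\cap\{y_1-x_1<y_0-y_1\}$, where $y_0$ is the location of the first particle reaching $0$. On the event $\{0\leftarrow\bullet\}\cap\{\dot\bullet_1\leftarrow\bullet\}$ the two gaps $y_1-x_1$ and $y_0-y_1$ are conditionally i.i.d.\ with atomless law, which gives the factor $\frac12$, and the renewal structure gives $\P\big((0\leftarrow\bullet)\wedge(\dot\bullet_1\leftarrow\bullet)\big)=pq^2$. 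This reversal symmetry --- comparing the distance the first bullet travels with the distance the next incoming left-mover travels --- is precisely the ingredient your argument gropes for but does not supply; without it (or an equivalent device) your integral cannot be evaluated and the lemma is not proved.
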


\begin{proof}[{Proof of Proposition~\ref{prop:dichotomy}}]
Combining Lemmas~\ref{lemma1} and~\ref{lemma2} yields immediately the equation
\[0=1-q-p-pq+pq^2+pq^3\]
hence
\[0=1-q-p(1+q-q^2-q^3)%=1-q-p(1-q)(1+q)^2
=(1-q)(1-p(1+q)^2),\]
implying, since $q\ge0$, that either $q=1$ or $q=\frac1{\sqrt p}-1$. Since $q\le1$, we conclude that $q=1$ when $p\le1/4$. 
\end{proof}

\newcommand{\cev}[1]{\reflectbox{\ensuremath{\vec{\reflectbox{\ensuremath{#1}}}}}}

\begin{proof}[Proof of Lemma 2]
Let us denote by $y_0$ the location of the first particle that reaches 0, if any, and by $y_1$ the location of the particle that annihilates with the first particle $\bullet_1$, if any. 

For any configuration $\omega$ of particle locations and speeds in $\{\vec\bullet_1\to\bullet\}$, denote by ${\rm rev}(\omega)$ the configuration obtained by reversing the interval $[x_1,y_1]$, that is, the configuration where particles outside $[x_1,y_1]$ are those of $\omega$, and particles inside $[x_1,y_1]$ are symmetric to those of $\omega$ with respect to $\frac{x_1+y_1}2$ and with opposite speeds. 

For $\omega$ in the event defining $r$, that is to say $\omega\in\{0\leftarrow\bullet\}\cap\{\vec\bullet_1\rightarrow\dot\bullet\}$, we clearly have ${\rm rev}(\omega)\in\{0\leftarrow\bullet\}\cap\{\dot\bullet_1\leftarrow\bullet\}$, and notice also that in this case the first bullet reaches $y_1$ before the particle initially at $y_0$ does, i.e.\ $y_1-x_1<y_0-y_1$, and this also holds for ${\rm rev}(\omega)$. Since conversely, for $\omega\in \{0\leftarrow\bullet\}\cap\{\dot\bullet_1\leftarrow\bullet\}\cap\{y_1-x_1<y_0-y_1\}$, we have ${\rm rev}(\omega)\in\{0\leftarrow\bullet\}\cap\{\vec\bullet_1\rightarrow\dot\bullet\}$, we conclude that ${\rm rev}$ is a bijection between $\{0\leftarrow\bullet\}\cap\{\vec\bullet_1\rightarrow\dot\bullet\}$ and $\{0\leftarrow\bullet\}\cap\{\dot\bullet_1\leftarrow\bullet\}\cap\{y_1-x_1<y_0-y_1\}$. Because ${\rm rev}$ preserves the measure, it follows that
\[\P\big((0\leftarrow\bullet)\wedge(\vec\bullet_1\rightarrow\dot\bullet)\big)=\P\big((0\leftarrow\bullet)\wedge(\dot\bullet_1\leftarrow\bullet)\wedge(y_1-x_1<y_0-y_1)\big).\]
We have $ \{0\leftarrow\bullet\}\cap\{\dot\bullet_1\leftarrow\bullet\}=\{\dot\bullet_1\leftarrow\bullet\}\cap\{y_1\leftarrow\bullet\}_{(y_1,\infty)}$, so that, conditional on that event, the distances $y_1-x_1$ and $y_0-y_1$ are independent and have the same distribution, which is atomless. Therefore, 
\[\P\big((0\leftarrow\bullet)\wedge(\dot\bullet_1\leftarrow\bullet)\wedge(y_1-x_1<y_0-y_1)\big)=\frac12\P\big((0\leftarrow\bullet)\wedge(\dot\bullet_1\leftarrow\bullet)\big).\]
To conclude, we finally have
\[\P\big((0\leftarrow\bullet)\wedge(\dot\bullet_1\leftarrow\bullet)\big)=\P\big((\dot\bullet_1\leftarrow\bullet)\wedge(y_1\leftarrow\bullet)_{(y_1,\infty)}\big)=pq^2.\]
\end{proof}

\section{A priori regularity properties}

Let us prove the following result, which in combination with Proposition~\ref{prop:dichotomy} immediately gives Theorem~\ref{thm:main}. 

\begin{proposition}\label{prop:connectivity}
For all $p\in(\frac14,1)$, $\theta(p)>0$. 
\end{proposition}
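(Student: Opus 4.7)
The plan is to show that the set $A\defeq\{p\in(1/4,1):q(p)<1\}$ is a nonempty clopen subset of the connected interval $(1/4,1)$ and therefore equals $(1/4,1)$, which by $\theta=(1-q)^2$ is equivalent to the desired conclusion $\theta>0$. The dichotomy from Proposition~\ref{prop:dichotomy} is what makes this strategy work: on $(1/4,1)$, $q(p)$ takes only the two values $1$ and $\frac1{\sqrt{p}}-1$, and the gap between them is bounded below on any compact subinterval, which lets us upgrade a merely one-sided continuity into the openness and closedness we need.

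First I would establish lower semi-continuity of $q$ in $p$. Since particle speeds are bounded by $1$, the event that $0$ has been reached by some particle before time $t$ depends only on the Poisson process and speeds restricted to $(0,t]$, so $q_t(p)\defeq\P_p(0\leftarrow\bullet\text{ by time }t)$ is a polynomial (hence continuous) function of $p$. Writing $q=\sup_t q_t$ as an increasing limit of continuous functions then gives $q$ lower semi-continuous. Combined with Proposition~\ref{prop:dichotomy}, this immediately yields closedness of $A$ in $(1/4,1)$: if $p_n\in A$ with $p_n\to p_\infty\in(1/4,1)$, then $q(p_n)=\frac1{\sqrt{p_n}}-1\to\frac1{\sqrt{p_\infty}}-1<1$, and lower semi-continuity of $q$ forces $q(p_\infty)\le\frac1{\sqrt{p_\infty}}-1<1$, i.e.\ $p_\infty\in A$.

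For openness of $A$, the plan is to invoke the finitary condition constructed in Section~3: a family of events $(E_L)_{L>0}$, each depending only on the Poisson process restricted to $(0,L]$, such that $E_L$ forces survival of a stationary particle (and hence $q<1$) and, conversely, $q(p)<1$ entails $\P_p(E_L)>0$ for some $L$. Since each $\P_p(E_L)$ is continuous (in fact polynomial) in $p$, the set $\{p:\P_p(E_L)>0\}$ is open, and so is $A=\bigcup_L\{p:\P_p(E_L)>0\}$. Nonemptiness of $A$ follows either by evaluating the finitary condition at $p$ near $1$ or by the elementary observation that, at $p$ close to $1$, a short string of nearby stationary particles blocks any moving particle from reaching $0$ with positive probability. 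By connectedness of $(1/4,1)$, this will complete the argument.

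The main obstacle is the construction of the events $E_L$ in Section~3. The event $\{q<1\}$ is intrinsically infinite-horizon, and the well-known lack of monotonicity in $p$ rules out simple coupling/comparison arguments to reduce it to a local property. The event $E_L$ must be designed so that the residual configuration left in $(0,L]$ after all internal annihilations forms a genuinely impenetrable barrier against any pattern of arrivals from $(L,\infty)$, which requires a careful analysis of how clusters of left- and right-movers interact with an independent incoming Poisson stream, and is exactly the content of the last section.
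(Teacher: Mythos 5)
Your overall strategy --- exhibiting the supercritical set as a nonempty clopen subset of the connected interval $(\frac14,1)$ --- is exactly the paper's, and your closedness argument is essentially its Lemma~\ref{lem:subcrit_open}: lower semi-continuity of $q$ as an increasing limit of continuous truncations, combined with the dichotomy of Proposition~\ref{prop:dichotomy}. (A small correction there: $q_t$ is not a polynomial in $p$, since the number of particles in $(0,t]$ is Poisson and hence unbounded; the paper truncates to the first $k$ \emph{particles} to get genuine polynomials. Your $q_t$ is still continuous by dominated convergence over the Poisson particle count, so this half of the argument survives.)

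The genuine gap is in the form you assign to the finitary condition used for openness. No event $E_L$ depending only on the configuration in $(0,L]$ can \emph{force} survival: conditionally on any such event the block contains some finite number $n$ of particles, and with positive probability (independently of the block) the next $n+1$ particles to the right of $L$ are all left-going; these chew through the block entirely and reach $0$. So ``impenetrable barrier'' events do not exist in this model, and the same misconception undermines your nonemptiness argument --- a short string of stationary particles does \emph{not} deterministically block incoming particles, for the same reason. The correct finitary condition (the paper's Proposition~\ref{prop:characterization}) is of a different nature: it is the \emph{expectation} condition $\E[N_k]>0$ on a signed count (surviving stationary minus surviving left-going particles in a $k$-particle block), and its sufficiency for $\theta(p)>0$ is not a deterministic implication but requires a superadditivity lemma for $N$ together with the law of large numbers applied to i.i.d.\ copies of $N_k$ along a regeneration-type exploration. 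Once that characterization is in hand, openness follows exactly as you say, since $p\mapsto\E[N_k]$ is polynomial; and nonemptiness follows from $\E[N_1]=\frac12(3p-1)>0$ for $p>\frac13$ (the result of~\cite{sidoravicius-tournier} that the paper cites), not from a blocking configuration. In short, your skeleton is the right one, but the object you defer to Section~3 is not one that any construction can deliver, and the step ``$E_L$ forces survival'' would fail as stated.
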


The proof follows from the two lemmas below. These lemmas respectively rely on two different characterizations of the supercritical phase $\{p\st\theta(p)>0\}$ by means of sequences of conditions about finite subconfigurations. Let us already warn the reader that the definition and properties of the more involved characterization are postponed until the next section. 

\begin{lemma}\label{lem:subcrit_open}
The set of subcritical parameters $\{p\in(\frac14,1)\st\theta(p)=0\}$ is open. 
\end{lemma}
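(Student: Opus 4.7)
The plan is to express $\{0\leftarrow\bullet\}$ as an increasing union of finitary events on which the probability depends continuously on $p$, and then to exploit the gap between the two values allowed by Proposition~\ref{prop:dichotomy} in order to propagate the condition $q=1$ from $p_0$ to an open neighborhood. Throughout, I use the identity $\theta(p)=(1-q(p))^2$, which follows from the independence of the Poisson processes on the two half-lines together with the fact that no particle can cross $0$ before a first arrival at $0$ occurs; in particular, $\{p\st\theta(p)=0\}=\{p\st q(p)=1\}$.

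For $L>0$ I would define
\[A_L\defeq\{\text{some particle reaches $0$ by time }L\}.\]
The key observation is a light-cone property: since all speeds belong to $\{-1,0,+1\}$, any bullet initially at a position $y>2L$ is at distance $>L$ from $[0,L]$ and so, even through chains of interactions, cannot influence the dynamics on $[0,L]$ during the time interval $[0,L]$. Hence $A_L$ is measurable with respect to the (a.s.\ finite) set of bullets in $(0,2L]$. Two consequences then follow: (a)~conditioning on the Poisson number of bullets in $(0,2L]$ expresses $\P_p(A_L)$ as a Poisson-weighted sum of polynomials in $p$, so $p\mapsto\P_p(A_L)$ is continuous (indeed real-analytic); (b)~$(A_L)_L$ is increasing with $\bigcup_L A_L=\{0\leftarrow\bullet\}$ up to a null set, hence $\P_p(A_L)\uparrow q(p)$ as $L\to\infty$.

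Now fix $p_0\in(\frac14,1)$ with $\theta(p_0)=0$, so $q(p_0)=1$. Since $p_0>\frac14$, we have $\frac1{\sqrt{p_0}}-1<1$, and by continuity there exist $\eta>0$ and an open neighborhood $U\subset(\frac14,1)$ of $p_0$ on which $\frac1{\sqrt p}-1\le 1-2\eta$. By~(b), pick $L$ with $\P_{p_0}(A_L)>1-\eta$; by~(a), shrink $U$ to an open neighborhood $V$ of $p_0$ on which $\P_p(A_L)>1-2\eta$. For every $p\in V$,
\[q(p)\ge\P_p(A_L)>1-2\eta\ge\frac1{\sqrt p}-1,\]
so Proposition~\ref{prop:dichotomy} forces $q(p)=1$, that is $\theta(p)=0$. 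Hence $V\subset\{p\st\theta(p)=0\}$, proving openness. The only step requiring any real care is the light-cone localization of $A_L$ to bullets in $(0,2L]$, and it is essentially immediate from the speed bound; the absence of monotonicity in $p$ that the authors flag as a major difficulty is sidestepped here precisely because the dichotomy supplies a strict gap between the two admissible values of $q$ throughout $(\frac14,1)$.
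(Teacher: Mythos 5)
Your argument is correct and is essentially the paper's proof: a monotone finitary approximation of $q$ from below by quantities depending continuously on $p$, combined with the gap in the dichotomy of Proposition~\ref{prop:dichotomy}, written out as an explicit $\eps$-neighbourhood argument where the paper instead exhibits the subcritical set directly as the open union $\bigcup_{k}\{p\st q_k>\frac1{\sqrt p}-1\}$ with $q_k=\P((0\leftarrow\bullet)_{[0,x_k]})$ a polynomial in $p$. The only substantive difference is the choice of cutoff: the paper truncates to the first $k$ particles, so that $q_k\uparrow q$ rests on the monotonicity of $(0\leftarrow\bullet)_{[0,x_k]}$ in $k$ (adding particles on the right preserves the event that $0$ is hit), whereas your time cutoff $A_L$ is trivially monotone in $L$ but requires the finite-speed-of-propagation localization to $(0,2L]$ --- which you argue correctly --- to make $p\mapsto\P_p(A_L)$ continuous as a Poisson mixture of polynomials; both routes work and neither buys much over the other.
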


\begin{lemma}\label{lem:supercrit_open}
The set of supercritical parameters $\{p\in(\frac14,1)\st \theta(p)>0\}$ is open.
\end{lemma}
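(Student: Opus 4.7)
The plan is to combine two ingredients: a \emph{finitary characterization} of the supercritical phase, and the \emph{continuous dependence on $p$} of the probability of any finite-volume event.

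First, I will appeal to the characterization postponed to the next section, which should provide a sequence of events $(E_n)_{n\geq 1}$, each depending only on the restriction of the Poisson configuration (positions and speeds) to some bounded window $[0,L_n]$, with the property that
\[
\theta(p)>0\quad\Longleftrightarrow\quad\exists\, n\geq 1:\ \P_p(E_n)>0,
\]
where $\P_p$ denotes the law of the process with parameter $p$. Heuristically, any local mechanism that yields a positive probability of survival at $0$ should be visible in a sufficiently large but bounded window---for example, a configuration in $[0,L_n]$ that, combined with a generic event on $(L_n,\infty)$, creates a ``shield'' preventing $0$ from being reached.

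Second, for each fixed $n$, I observe that $p\mapsto \P_p(E_n)$ is continuous on $[0,1]$. Indeed, conditioning on the (Poisson-distributed) number of particles in $[0,L_n]$ and integrating over their ordered positions, one may write $\P_p(E_n)$ as an absolutely convergent series of integrals whose integrands are polynomial in $p$ (each speed being drawn from $\tfrac{1-p}2\delta_{-1}+p\delta_0+\tfrac{1-p}2\delta_{1}$); this yields continuity, in fact analyticity, on $[0,1]$.

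Combining these, if $p_0\in(\tfrac14,1)$ is supercritical then some $n$ satisfies $\P_{p_0}(E_n)>0$, and by continuity $\P_p(E_n)>0$ on an open neighbourhood $U$ of $p_0$ in $(\tfrac14,1)$; applying the characterization in the other direction, every $p\in U$ is supercritical, so the supercritical set is open. The main obstacle is clearly the finitary characterization itself: since $\{\theta(p)>0\}$ is a tail event depending on infinitely many particles, one must isolate a local mechanism that is both sufficient (to guarantee survival despite particles arriving from outside the window) and necessary (supercriticality must force such a mechanism to occur with positive probability in some bounded window). I expect this to rely on a careful analysis of how left-moving particles entering from $(L_n,\infty)$ can be absorbed by the configuration inside $[0,L_n]$, likely along the lines of a renewal or block argument.
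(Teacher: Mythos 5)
Your overall strategy --- reduce supercriticality to a countable union of conditions, each depending only on a bounded window, then invoke continuity (indeed polynomiality) in $p$ of finite-volume quantities --- is exactly the paper's strategy, and your continuity step is fine. But the specific form you propose for the finitary characterization, namely $\theta(p)>0\Leftrightarrow\exists n,\ \P_p(E_n)>0$ with events $E_n$ measurable with respect to a bounded window and not depending on $p$, cannot be correct. The particle positions form a Poisson process whose law does not depend on $p$, and for any $p,p'\in(0,1)$ the two speed laws both charge all of $\{-1,0,+1\}$; hence the laws of the configuration restricted to any bounded window are mutually absolutely continuous, so $\P_p(E_n)>0$ holds for one $p\in(0,1)$ if and only if it holds for all. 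A characterization of the form you posit would therefore force the supercritical set to be either empty or all of $(0,1)$, contradicting the phase transition (and it would equally ``prove'' that the subcritical set is open and closed, trivializing the whole connectivity argument).

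The missing idea is that the finitary condition must be quantitative, not qualitative. The paper's Proposition~\ref{prop:characterization} uses $N_k$, the difference between the number of surviving stationary particles and the number of surviving left-going particles when only $k$ particles are present, and shows $\theta(p)>0\Leftrightarrow\exists k,\ \E[N_k]>0$. It is the \emph{sign of an expectation} of a signed functional that varies with $p$, not the positivity of a probability; openness then follows because $p\mapsto\E[N_k]$ is a polynomial. The sufficiency direction is also not a single-window ``shield'': it is a superadditivity plus law-of-large-numbers argument showing that when $\E[N_k]>0$, exploring the line in blocks yields a linearly growing surplus of surviving stationary particles over invading left-movers, which with positive probability stays positive forever and protects the origin. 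You correctly identified the characterization as the main obstacle, but the form you guessed for it is precisely where the argument would break.
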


\begin{proof}[{Proof of Proposition~\ref{prop:connectivity}}]
As a conclusion of the above lemmas, the set $A=\{p\in(\frac14,1)\st\theta(p)=0\}$ is both open and closed in $(\frac14,1)$. By connectivity of this interval, it follows that either $A=(\frac14,1)$ or $A=\emptyset$. Since we already know (cf.~\cite{sidoravicius-tournier}) that $A\subset(\frac14,\frac13)$, we deduce that $A=\emptyset$. 
\end{proof}

\begin{proof}[{Proof of Lemma~\ref{lem:subcrit_open}}]
We have $q=\limup_k q_k$ where, for all $k\in\N$, 
\[q_k=\P((0\leftarrow\bullet)_{[0,x_k]}),\]
which gives, using Proposition~\ref{prop:dichotomy},
\begin{align*}
\{p\in(\frac14,1)\st\theta(p)=0\}
	& =\{p\in(\frac14,1)\st q=1\}\\
	& =\{p\in(\frac14,1)\st q>\frac1{\sqrt p}-1\}=\bigcup_{k\in\N}\{p\in(\frac14,1)\st q_k>\frac1{\sqrt p}-1\},
\end{align*}
and each $q_k$ depends only on a configuration of $k$ particles, hence by conditioning on the speeds of these particles we see that $q_k$ is a polynomial in $p$ and therefore is continuous. The lemma follows.
\end{proof}

\begin{proof}[{Proof of Lemma~\ref{lem:supercrit_open}}]
Using the notation $N_k$ from the next section, the upcoming Proposition~\ref{prop:characterization} gives
\[\{p\in(\frac14,1)\st\theta(p)>0\}=\bigcup_{k\in\N}\Big\{p\in(\frac14,1)\st\E[N_k]>0\Big\}, \]
so that the lemma follows by noticing that, as can be seen by conditioning on the speeds of the $k$ particles, the function $p\mapsto\E[N_k]$ is polynomial hence continuous.
\end{proof}

\section{Characterization of the supercritical phase}

While Lemma~\ref{lem:subcrit_open} relies on the simple monotone approximation $q=\limup_k q_k$, where for all $k\in\N$ the probabilities $q_k=\P((0\leftarrow\bullet)_{[0,x_k]})$ depend only on a configuration of $k$ particles, Lemma~\ref{lem:supercrit_open} relies on a formally similar but more involved characterization. This characterization is already alluded to in the first of the final remarks of~\cite{sidoravicius-tournier} as a way to numerically upper bound $p_c$. Given its importance in the present proof, we give it here a more thorough presentation, and show it is necessary and sufficient. 

For all $k\in\N$, consider a random configuration containing only the $k$ bullets $\bullet_1,\ldots,\bullet_k$ (initially located at $x_1,\ldots,x_k$), and denote by $N_k$ the difference between the number of surviving stationary particles and the number of surviving left-going particles: letting $I_k=[x_1,x_k]$, 
\[N_k\defeq\sum_{i=1}^k(\indic_{\dot\bullet_i}-\indic_{\cev\bullet_i})\indic_{(\bullet\not\rightarrow \bullet_i)_{I_k}\wedge(\bullet_i\not\leftarrow \bullet)_{I_k}}\]
In the following, the event in the last indicator function will be written ``$(\bullet_i\text{ survives})_{I_k}$''. 

\begin{proposition}\label{prop:characterization}
For all $p\in(0,1)$, $\theta(p)>0$ $\Leftrightarrow$ $\exists k\ge1,\ \E[N_k]>0$. 
\end{proposition}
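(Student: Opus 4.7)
The plan is to establish the equivalence by proving each implication separately. Throughout, write $N_k=S_k-L_k$, where $S_k$, $L_k$, $R_k$ denote the numbers of stationary, left-going, and right-going bullets surviving the restricted $k$-bullet dynamics on $I_k$, and note that $\E[L_k]=\E[R_k]$ by left--right symmetry of the speed distribution.

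For the direction $(\Rightarrow)$, suppose $\theta(p)>0$; the goal is to show $\E[N_k]>0$ for $k$ large. A standard monotone coupling shows that restricting the Poisson process to $I_k$ removes some bullets and hence some collisions, so every stationary bullet surviving in the full $\R$-process is still a stationary survivor in the restricted $I_k$-process. By translation invariance, each of the $k$ bullets is stationary and surviving in the full process with probability $p\theta(p)$, giving $\E[S_k]\ge kp\theta(p)$. I would then bound $\E[L_k]=o(k)$ by a ballot-type matching argument: scanning the bullet types left to right and pairing each left-going bullet with the most recent unmatched right-going bullet dominates $L_k$ by the maximum of a centered random walk of length $k$, yielding $\E[L_k]=O(\sqrt k)$. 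Combining, $\E[N_k]\ge kp\theta(p)-O(\sqrt k)>0$ for $k$ large enough.

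For the direction $(\Leftarrow)$, suppose $\E[N_k]>0$, and aim to produce a stationary bullet with positive survival probability in the full $\R$-process. I would tile $\R$ into blocks $J_m$ ($m\in\Z$), each containing $k$ consecutive bullets of the Poisson process; by independence of disjoint Poisson increments and translation invariance, the intra-block configurations form an i.i.d.\ sequence. Running the restricted dynamics inside each block gives $S_m,L_m,R_m$ with $\E[S_m-L_m]=\E[S_m-R_m]=\E[N_k]>0$. Since each incoming killer (an L-survivor emerging from a block on the right, or an R-survivor from a block on the left) can annihilate with at most one other particle before being itself destroyed, applying the strong law of large numbers to the cumulative sums $\sum_{m=1}^M(S_m-L_m)$ and $\sum_{m=-M}^0(S_m-R_m)$, both of which tend to $+\infty$ a.s., produces a positive-probability event on which some stationary survivor of the central block $J_0$ is never reached by any outside killer, hence survives in the full process.

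The main obstacle in $(\Rightarrow)$ is making the bound $\E[L_k]=o(k)$ rigorous in the three-speed model, since the clean matching argument for the R--L-only case is complicated by stationary bullets, which simultaneously absorb right-going bullets (thereby helping some L's to escape) and block left-going bullets directly; a careful invariance or Skorokhod-style embedding into a mean-zero walk will likely be needed. The main obstacle in $(\Leftarrow)$ is turning the positive cumulative drift into a genuine survival statement: the true $\R$-dynamics do not in general coincide with performing all intra-block collisions first and inter-block collisions second, so the block reduction requires either a monotone stochastic comparison between the two-step dynamics and the actual process, or a direct argument on a carefully chosen ``good'' event in a large window that shields the central survivor's fate from any discrepancy with the true collision order.
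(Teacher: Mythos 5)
Your skeleton for both directions matches the paper's, but at each of the two places you yourself flag as ``the main obstacle'' the paper uses a specific idea that your sketch does not supply, and in both cases the gap is real. For the direct implication, you do not need (and would struggle to prove) a ballot-type bound $\E[L_k]=O(\sqrt k)$ in the three-speed model; the paper instead observes that $L_k$ is \emph{nondecreasing} in $k$ (a left-mover's fate depends only on the configuration to its left) and that its limit, the total number of surviving left-movers on $(0,\infty)$, is geometric with parameter $1-q$. Under the hypothesis $\theta(p)>0$ one has $q<1$, so $\E[L_k]\le q/(1-q)$ \emph{uniformly in $k$}, and $\E[N_k]\ge kp\theta(p)-q/(1-q)\to\infty$. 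Your proposed matching argument, by contrast, is left as a hope (``a careful invariance or Skorokhod-style embedding will likely be needed''), and it is not clear it can be carried out; the hypothesis $\theta(p)>0$ is exactly what makes the easy geometric bound available, and your sketch never uses it for this step.

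For the reverse implication the gap is more serious. A plain tiling of $\R$ into i.i.d.\ $k$-bullet blocks does not admit a one-sided comparison with the true dynamics: if a block retains a surviving right-going particle, that particle can enter the next block and annihilate one of its stationary survivors, which makes the naive superadditivity $N(1,l)\ge N(1,k)+N(k+1,l)$ \emph{false}. The paper's key device is precisely to repair this: after revealing each block of $k$ particles, it continues exploring to the right until the entire revealed region has \emph{no} surviving right-going particle (the extra particles revealed can only help, since right-movers are not counted in $N$), and proves superadditivity under that hypothesis (its Lemma~\ref{lem:superadditivity}). The resulting block variables $\Nt^{(n)}$ are still i.i.d.\ copies of $N_k$, the law of large numbers applies, and a final bookkeeping step (at least $k+1$ stationary survivors after each stage, at most $k$ of which can be killed during the next stage) shows $0$ is never reached. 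Your proposal names the need for ``a monotone stochastic comparison or a carefully chosen good event'' but does not construct either; without the stopping rule that eliminates right-going survivors before closing a block, the comparison you need simply does not hold. So the proposal correctly locates both difficulties but resolves neither.
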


\paragraph{\bf Remark.} The fact that $\E[N_1]=\frac12(3p-1)$ recovers (cf.~\cite{sidoravicius-tournier}) that $\theta(p)>0$ when $p>1/3$. The proof of this fact in~\cite{sidoravicius-tournier} is in fact the scheme for the general one given below. Considering $\E[N_2]$ gives the same condition, however $\E[N_3]=3p^3+7p^2\pb-\frac32p\pb^2-8\pb^3$ (where $\pb=\frac{1-p}2$) yields the value $0.32803$ from the remark in~\cite{sidoravicius-tournier}. As the proposition shows, pushing this method further would give arbitrarily good numerical approximations of $p_c$. Let us remind that, although such approximations are rendered pointless by Theorem~\ref{thm:main}, the \textit{existence} of this method still is a theoretical tool in the proof of the said theorem. 

\begin{proof}
\noindent{\it Direct implication.} 
Assume that $\theta(p)>0$. Let us decompose $N_k=\dot N_k-\cev N_k$, where $\dot N_k$ and $\cev N_k$ respectively denote the number of stationary and left-going particles among $\bullet_1,\ldots,\bullet_k$ that survive in restriction to $[x_1,x_k]$. 

For any integer $i$, the event $\{\dot\bullet_i\text{ survives}\}_I$ decreases with the interval $I$ (containing $x_i$). If indeed $\bullet_i$ is stationary and is annihilated by a bullet inside an interval $I$, then introducing new bullets outside $I$ can possibly change the side from which $\bullet_i$ is hit, but not the fact that this bullet is hit. In particular, the number of stationary bullets among $\bullet_1,\ldots,\bullet_k$ that survive in restriction to $[x_1,x_k]$ is larger than or equal to the number of such bullets that survive in ``restriction'' to the whole real line. Taking expectations, by translation invariance of the process on $\R$ this gives
\[\E[\dot N_k]\ge k \P\big((\dot\bullet_1\text{ survives})_\R\big)=kp\theta(p),\]
hence in particular $\E[\dot N_k]\to+\infty$ as $k\to\infty$. 

On the other hand, $\E[\cev N_k]$ is uniformly bounded in $k$. Indeed, $\cev N_k$ clearly grows with $k$, and its limit $\cev N_\infty=\limup_k \cev N_k$ is the number of surviving left-going particles in $(0,\infty)$, and this number has geometric distribution with parameter $1-q>0$ (notice indeed that the configuration on the right of a surviving left-going particle is identically distributed as the configuration on $(0,\infty)$, up to translation) and therefore is integrable. 

We conclude that $\E[N_k]=\E[\dot N_k]-\E[\cev N_k]\ge kp\theta(p)-\frac q{1-q}\to +\infty$ as $k\to\infty$, hence $\E[N_k]>0$ for large~$k$. 

\noindent{\it Reverse implication.}
Assume now that $\E[N_k]>0$ for some $k\ge1$. 

For positive integers $i<j$, define $N(i,j)$ in the same way as $N_k$ except that only the bullets $\bullet_i,\ldots,\bullet_j$ are considered instead of $\bullet_1,\ldots,\bullet_k$. With this notation, $N_k=N(1,k)$. This function $N$ satisfies ``almost'' a superadditivity property. 

\begin{lemma}\label{lem:superadditivity}
Let $k<l$ be positive integers. For any configuration $\omega$ which, in restriction to $[x_1,x_k]$, has no surviving right-going particle, we have
\[N(1,l)\ge N(1,k)+N(k+1,l).\]
\end{lemma}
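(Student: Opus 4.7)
The plan is to reduce the superadditivity to a collision-counting inequality, then to invoke the hypothesis. For any ballistic annihilation process on a bounded interval, tracking how each collision type affects the stationary and left-going survivor counts (a right-left collision removes one left-goer, a right-stationary removes one stationary, a stationary-left removes one of each) yields the identity
\[
\dot s - \cev s = (\dot n - \cev n) + n_{rl} - n_{rs},
\]
where $\dot n, \cev n$ are the initial numbers of stationary and left-going particles and $n_{rl}, n_{rs}$ are the numbers of right-left and right-stationary collisions. Applying this to the three processes $C$ on $[x_1, x_l]$, $A$ on $[x_1, x_k]$, and $B$ on $[x_{k+1}, x_l]$, and using that the initial counts add over the partition of particles, the desired inequality reduces to
\[
n_{rl}^C - n_{rs}^C \ge (n_{rl}^A - n_{rs}^A) + (n_{rl}^B - n_{rs}^B),
\]
where the superscript denotes the relevant process. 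Under the hypothesis, every right-going A-particle is annihilated in isolated $A$, i.e.\ $n_{rl}^A + n_{rs}^A = \vec n_A$.

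Next, I would classify collisions in $C$ as within-$A$, within-$B$, or cross (involving one A-particle and one B-particle). Since left-going A and right-going B particles move away from the interface $x_k$, cross-collisions are confined to three types: right-A with left-B, right-A with stat-B, and stat-A with left-B. The natural approach is then to compare the dynamics of $C$ with those of isolated $A$ and $B$ by inducting on the number of cross-collisions ordered in time, showing at each step that introducing a new cross-collision cannot decrease the quantity $n_{rl} - n_{rs}$ aggregated across the three processes.

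The main obstacle is the cascading nature of the dynamics: a single cross-collision reroutes downstream A- and B-collisions in a non-local way, so the within-$A$ and within-$B$ collision counts in $C$ cannot be compared term-by-term with those in isolated $A$ and $B$. The hypothesis is critical in handling this cascade: whenever a right-going A-particle is ``freed'' by a B-intervention, it must either annihilate against some other A- or B-particle (contributing a favorable rl or rs collision in $C$) or escape rightward past $x_k$ into the B-region, where it either exits at $x_l$ or is absorbed by a B-particle. In all cases the bookkeeping can be closed so that the aggregated $n_{rl} - n_{rs}$ does not decrease, which is precisely the content of the reduced inequality.
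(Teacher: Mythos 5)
Your reduction of the lemma to the collision-count inequality
\[
n_{rl}^C - n_{rs}^C \ \ge\ \bigl(n_{rl}^A - n_{rs}^A\bigr) + \bigl(n_{rl}^B - n_{rs}^B\bigr)
\]
is correct, and it is a genuinely different bookkeeping from the paper's, which works directly with the survivor count $N$. But your proof stops exactly where the work begins: the reduced inequality is equivalent to the lemma, and the passage meant to establish it --- ``in all cases the bookkeeping can be closed so that the aggregated $n_{rl}-n_{rs}$ does not decrease'' --- asserts the conclusion rather than proving it. You never exhibit the objects your induction would run on (what is the hybrid process with ``$m$ cross-collisions introduced''? in what sense are cross-collisions of $C$ ordered relative to collisions of the isolated processes?), nor the case analysis showing each step is non-decreasing. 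Moreover, the one case you do gesture at is miscounted: a freed right-going $A$-particle that annihilates a stationary particle produces an \emph{rs} collision in $C$, which enters the reduced inequality with a \emph{minus} sign, so it is not ``favorable''; whether the inequality survives that case depends on what happened to that stationary particle in isolated $A$ or $B$, which is precisely the cascade you would need to track. You also omit that the cascade can bounce back: the freed right-goer may kill a $B$-stationary that was already dead in isolated $B$, thereby freeing a $B$-left-goer that re-enters the $A$-region, and so on.

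The paper closes exactly this gap with a direct chain argument: since no right-going particle survives in $[x_1,x_k]$, the only interaction between the two blocks is initiated by the surviving left-goers of $[x_{k+1},x_l]$; each such particle either meets a surviving stationary particle of the left block (net change $0$ to $N$) or meets a dead one and unleashes its right-going killer, which in turn survives ($+1$), kills a surviving left-goer ($+2$), kills a surviving stationary ($0$), or kills a dead stationary of the right block and unleashes its left-going killer, restarting the chain. Every branch of this finite case analysis is checked to be $\ge 0$. Some version of this enumeration (translated into your $n_{rl}-n_{rs}$ language if you prefer) is indispensable; without it the lemma is not proved.
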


\begin{proof}[{Proof of Lemma~\ref{lem:superadditivity}}]
When the configurations in $I=[x_1,x_k]$ and in $J=[x_{k+1},x_l]$ are combined, the surviving left-going particles from $J$ can interact with particles from $I$. Each of them either annihilates with a surviving stationary particle (hence giving the same $0$ contribution to both hand sides) or annihilates with a stationary particle that was annihilated in restriction to $I$ hence unleashes its right-going peer which can either survive (making the left-hand side greater by 1), annihilate with a surviving left-going particle (making the left-hand side greater by 2), annihilate with a surviving stationary particle (keeping sides equal) or again annihilate with a stationary particle that was annihilated in restriction to $J$ hence unleash its left-going peer which is offered the same range of possibilities as the particle we first considered. Thus in any case the identity remains satisfied after the effect of each of these left-going particles is taken into account. 
\end{proof}

We shall progressively explore the configuration, starting from 0 and going to the right, by repeating the following two steps: first, discover the next $k$ particles, and then discover the least necessary number of particles until there is no surviving right-going particle in the whole discovered region. We will denote by $K_0=0, K_1, K_2,\ldots$, the number of particles discovered in total after each iteration, and by $\Nt^{(1)}(=N_k),\Nt^{(2)},\ldots$ the quantity computed analogously to $N_k$ but on the newly discovered block of $k$ particles at each iteration, i.e., for all $n$, $\Nt^{(n+1)}=N(K_n+1,K_n+k)$. Let us explain the first iteration in some more detail.

We start by considering the first $k$ particles. Let $\Nt^{(1)}=N(1,k)$. If, in the configuration restricted to $[x_1,x_k]$, no right-going particle survives, then we let $K_1=k$. Otherwise, let $\tau_0$ denote the index of the leftmost surviving right-going particle, and appeal for instance to~\cite[Lemma 3.3]{sidoravicius-tournier} to justify the existence of a minimal $\gamma_1$ such that the event $\{\vec\bullet_{\tau_0}\to\bullet_{\gamma_1}\}_{[\tau_0,\gamma_1]}$ happens, and let $K_1=\gamma_1$. By definition we have that, in both cases, in restriction to $[x_1,x_{K_1}]$, there is no surviving right-going particle and $\Nt^{(1)}=N(1,K_1)$. We then keep iterating this construction: define $\Nt^{(2)}=N(K_1+1,K_1+k)$, and keep exploring on the right of $\bullet_{K_1+k}$ until no surviving right-going particle remains, define $K_2$ to be the index that was reached, and so on. By this construction, the random variables $\Nt^{(n)}$ are i.i.d.\ with same distribution as $N_k$, and for all $n$ we have $N(1,K_n+k)=N(1,K_{n+1})$ and there is no surviving right-going particle in restriction to $[x_1,x_{K_{n+1}}]$. Thus, by repeatedly using the lemma, we have for all $n$, 
\[N(1,K_n)\ge \Nt^{(1)}+\cdots+\Nt^{(n)}.\]
However, by the assumption and the law of large numbers, with positive probability $\Nt^{(2)}+\cdots+\Nt^{(n)}>0$ for all $n\ge2$. Therefore, still with positive probability, it may be that the first $k$ particles are stationary (hence $\Nt^{(1)}=k$) and that $\Nt^{(1)}+\cdots+\Nt^{(n)}> k$ for all $n\ge2$, so that $N(1,K_n)> k$ for all $n\ge2$. This event ensures that 0 is never hit: indeed after the $n$-th iteration of the exploration (for $n\ge2$) there are at least $k+1$ surviving stationary particles due to the definition of the event, but at most $k$ of them can be annihilated by the particles discovered between $K_n$ and $K_{n+1}$, hence by induction the first stationary particle survives forever and prevents 0 from being hit. Thus $\theta(p)>0$.
\end{proof}

\paragraph{\bf Remark}
In the discrete ballistic annihilation model introduced in~\cite{junge2}, the analog of Lemma 2 is wrong due to triple collisions. The same arguments indeed give
\[\hat r=\P_\Z(D>D')p\hat q^2<\frac12 p\hat q^2,\]
where $D$ is the location of the first particle that reaches zero, and $D'$ is an independent copy of $D$. Since $D$ is integer valued, it holds more precisely that
\[\P_\Z(D>D')=\frac12\P_\Z(D\ne D')=\frac12\big(1-\P_\Z(D=D')\big)\]
and $\P_\Z(D=D')$ can be interpreted as the probability that, on the full line, a given stationary particle is involved in a triple collision. 
From $\hat r<\frac12p\hat q^2$, the computation done in the proof of Theorem~\ref{thm:main} shows that, if $\hat q<1$, then $0<1-p(1+\hat q)^2$, hence $\hat q<\frac1{\sqrt p}-1$ and thus the surviving probability of a stationary particle on the full line satisfies
\[\psi(p)= (1-\hat q)^2>\Big(2-\frac1{\sqrt p}\Big)^2=\theta(p).\]
Thanks to this dichotomy, we can argue as in the original model that, for all $p>\frac14$, $\psi(p)>0$ hence furthermore $\psi(p)>\theta(p)$. This comparison was heuristically expected in~\cite{junge2}.

\bibliographystyle{acm}
\bibliography{biblio}

\begin{thebibliography}{1}

\bibitem{junge2}
{\sc Burdinshi, D., Gupta, S., and Junge, M.}
\newblock The upper threshold in ballistic annihilation.
\newblock {\em arXiv preprint arXiv:1805.10969\/} (2018).

\bibitem{droz1995ballistic}
{\sc Droz, M., Rey, P.-A., Frachebourg, L., and Piasecki, J.}
\newblock Ballistic-annihilation kinetics for a multivelocity one-dimensional
  ideal gas.
\newblock {\em Physical Review E 51}, 6 (1995), 5541.

\bibitem{haslegrave}
{\sc Haslegrave, J.}
\newblock The ballistic annihilation threshold is positive.
\newblock {\em arXiv preprint arXiv:1808.07736\/} (2018).

\bibitem{sidoravicius-tournier}
{\sc Sidoravicius, V., and Tournier, L.}
\newblock Note on a one-dimensional system of annihilating particles.
\newblock {\em Electronic Communications in Probability 22}, 59 (2017), 9 pp.

\end{thebibliography}

\end{document}